\newtheorem{theorem}{Theorem}
\newtheorem{lemma}[theorem]{Lemma}
\newtheorem{observation}{Observation}
\newtheorem{definition}{Definition}
\newtheorem{conjecture}{Conjecture}
\DeclareMathOperator{\order}{order}
\begin{document}
\title{Shuffling with ordered cards}
\author{Steve Butler\thanks{UCLA, { butler@math.ucla.edu}}~\thanks{Supported by an NSF Postdoctoral fellowship.} \and Ron Graham\thanks{UCSD, { graham@ucsd.edu}}}
\date{\empty}
\maketitle

\begin{abstract}
We consider a problem of shuffling a deck of cards with ordered labels.  Namely we split the deck of $N=k^tq$ cards (where $t\geq 1$ is maximal) into $k$ equally sized stacks and then take the top card off of each stack and sort them by the order of their labels and add them to the shuffled stack.  We show how to find stacks of cards invariant and periodic under the shuffling.  We also show when $\gcd(q,k)=1$  the possible periods of this shuffling are all divisors of $\order_k(N-q)$.
\end{abstract}



\section{Introduction}\label{sec:introduction}
There are many ways to shuffle a deck of cards. One of the most common is to split the deck into equal halves and then riffle or dovetail shuffle the cards back together, wherein the cards from the two halves interlace.  A perfect shuffle of this type is one where the cards alternate perfectly between the two halves.  There are two types of perfect shuffles, depending on which card ends up on top. These are called ``in'' and ``out'' shuffles and have been frequently described in the literature (see, e.g.\ \cite{perfect} or \cite{morris}).

Another way to think about this shuffling is  we split the deck in two equal stacks and go through the stack from top to bottom using a rule about how to put the two cards into the newly shuffled stack.  In and out shuffles correspond to the two rules where either we always put the card from the second half of the stack on top or we always put the card from the first half of the stack on top.

In this paper we start the examination of a new kind of shuffling where our rule is to set an order on the labeling of the cards (we allow for labels to be used multiple times in the deck) and we again go through from top to bottom but now we let the order of the labeling on the cards determine which one goes on top.  That is, starting with $N=kn$ cards with labels from the ordered list $\mathcal{A}_1\succ \mathcal{A}_2\succ\cdots\succ \mathcal{A}_j$, divide the stack of cards into $k$ stacks of $n$, then take the top card off of each stack, sort the $k$ cards (according to the order of the labels, where if the labels agree then the order they are sorted is unimportant) and add them to the new stack.  An example of this is shown in Figure~\ref{fig:sorting} where the labels are $2\succ 1\succ 0$ and we let the heaviest weights ``sink down'' (i.e., go to the lower card height where we start counting from the top card down).

\begin{figure}
\centering
\includegraphics[scale=1.1]{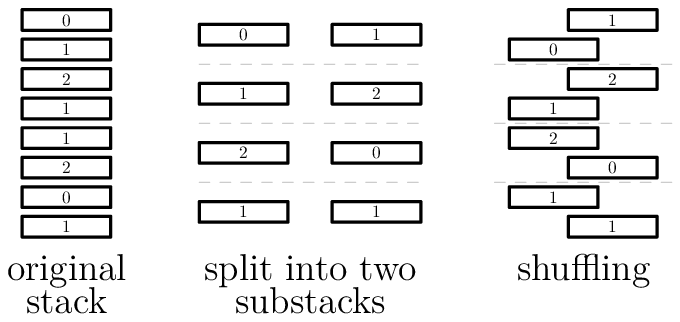}
\caption{An example of shuffling with ordered cards taking the deck $01211201$ to the deck $10212011$.}
\label{fig:sorting}
\end{figure}

Equivalently, this is the same as starting out with a list of $kn$ labeled  objects,
\[
a_0,a_1,\ldots,a_{n-1},a_{n},a_{n+1},\ldots,a_{2n-1},a_{2n},\ldots,a_{kn-1},
\]
and putting this into a $k{\times} n$ matrix where we proceed by filling up the rows left to right and top to bottom,
\[
\begin{pmatrix}
a_0&a_{1}&\cdots&a_{n-1}\\
a_{n}&a_{n+1}&\cdots&a_{2n-1}\\
\vdots&\vdots&\ddots&\vdots\\
\cdot&\cdot&\cdots&a_{kn-1}
\end{pmatrix}.
\]
Now take this matrix and sort the elements in each {\em column}\/ according to the ordering of the labels,
\[
\begin{pmatrix}
b_0&b_{k}&\cdots&\cdot\\
b_1&b_{k+1}&\cdots&\cdot\\
\vdots&\vdots&\ddots&\vdots\\
b_{k-1}&b_{2k-1}&\cdots&b_{kn-1}
\end{pmatrix}
\]
and finally concatenate the columns to form the new list.
\[
b_0,b_1,\ldots,b_{k-1},b_{k},b_{k+1},\ldots,b_{2k-1},b_{2k},\ldots,b_{kn-1}.
\]

As an example if we have $k=3$ and start with $N=12$ cards labeled $021100122110$ (where we again let the 3 labels be ordered $2\succ1\succ0$) then we have
\[
021100122110\longrightarrow
\begin{pmatrix}
0&2&1&1\\
0&0&1&2\\
2&1&1&0
\end{pmatrix}
\longrightarrow
\begin{pmatrix}
2&2&1&2\\
0&1&1&1\\
0&0&1&0
\end{pmatrix}
\longrightarrow
200210111210.
\]
This process can be repeated and so we have
\[
021100122110 \longrightarrow 200210111210 
\longrightarrow 211200110210 
\longrightarrow 200210111210
\]
and now we see we have a stack which will repeat itself every two shuffling steps, or in other words a {\em periodic}\/ stack.  A stack which returns to itself after one shuffling step will similarly be called a {\em fixed}\/ stack.

In general, we can relate the shuffling to a directed graph where each of the possible $j^{N}$ stacks are the vertices and we put a directed edge between two vertices if shuffling one stack gives the other.  Since the outcome of our shuffling is uniquely determined by the order of the cards, each edge will only have one edge going out (though it is possible  many edges can go in).  This immediately gives us the following.

\begin{observation}
Given any stack of cards after applying the shuffling procedure finitely many times we will settle into a stack which is periodic under the shuffling.
\end{observation}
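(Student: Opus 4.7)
The plan is to observe that this is a pigeonhole argument applied to a deterministic dynamical system on a finite state space. Let $f$ denote the shuffling map, viewed as a function on the set $\mathcal{S}$ of all possible stacks. Starting from any stack $S_0$, form the forward orbit $S_0, S_1=f(S_0), S_2=f(S_1),\ldots$

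First I would note that $|\mathcal{S}| = j^N$ is finite, so among the first $j^N+1$ iterates $S_0,S_1,\ldots,S_{j^N}$ there must be two equal entries, say $S_a = S_b$ with $a<b$. Next, because $f$ is a well-defined function (each vertex of the directed graph has out-degree exactly one, as emphasized in the preceding paragraph of the paper), equality of $S_a$ and $S_b$ propagates forward: $S_{a+m} = f^m(S_a) = f^m(S_b) = S_{b+m}$ for every $m\geq 0$. In particular, once we reach index $a$ the sequence becomes periodic with period dividing $b-a$, so after at most $a$ shuffling steps we have landed inside a periodic orbit, proving the observation.

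There is no substantive obstacle here; the argument is entirely a consequence of finiteness together with determinism of the shuffle. The only thing worth flagging is that ``periodic'' is being used in the sense of lying on a cycle under $f$, not in the sense that the original $S_0$ itself must be periodic — the transient prefix $S_0,\ldots,S_{a-1}$ may never recur. This matches the usage illustrated by the example in the text, where the deck $021100122110$ is not itself periodic but its image $200210111210$ lies on a $2$-cycle.
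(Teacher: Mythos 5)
Your argument is correct and is essentially the same as the paper's: the paper derives the observation immediately from the fact that the shuffle is a function (out-degree one) on the finite set of $j^N$ stacks, which is exactly your finiteness-plus-determinism pigeonhole argument. Your added remark distinguishing the transient prefix from the eventual periodic orbit is consistent with the paper's usage and needs no change.
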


There now arise  several natural questions. For example, how do we find fixed/periodic stacks?  What periods are possible?  How many periodic/fixed stacks are there? How long does it take for a stack to settle into a periodic stack?

In this paper we will answer some of these questions.  In Section~\ref{sec:cycles} we will introduce a weight function on the subscripts and show how to use this to represent the shuffling by a poset structure. In Section~\ref{sec:gcd} we will show in the case when $\gcd(q,k)=1$ (where $N=k^tq$)  the possible periods are all divisors of $\order_k(N-q)$.  In Section~\ref{sec:posets} we will show how to adopt the poset structures to find posets that generate all fixed and periodic stacks.  Finally, in Section~\ref{sec:conclusion} we will give some concluding remarks.

We will throughout assume  the number of cards is $N=k^tq=kn$, where $t\geq 1$ is the highest power of $k$ that divides $N$, and $n=k^{t-1}q$ is the size of the stacks  we split $N$ into when shuffling.  For simplicity we will focus on subscripts, i.e., $i\to j$ means  $a_i\to b_j$.  

\section{Representing our shuffling using a poset structure}\label{sec:cycles}
The key to understanding this shuffling is looking at a column which will (for some $\ell\in\{0,\ldots,n-1\}$) consist of the terms
\[
\ell,\ell+n,\ell+2n,\ldots,\ell+(k-1)n,
\]
and after sorting will be sent to 
\[
k\ell,k\ell+1,k\ell+2,\ldots,k\ell+(k-1).
\]
For example, returning to the case $N=12$ and $k=3$ then we get the following.

\bigskip

\noindent\hfil\includegraphics[scale=0.9]{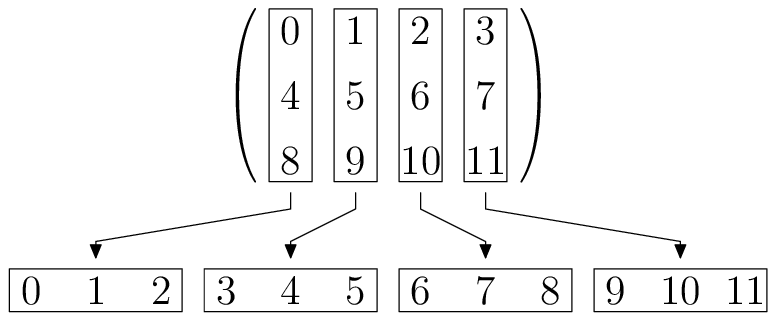}\hfil

\bigskip

\noindent This shows, for example,  $\{a_0,a_4,a_8\}\to\{b_0,b_1,b_2\}$ in some order (depending on the labels of the cards).  To help us understand what is happening it is useful to weight the subscripts.

\begin{definition}
A shuffling weight function on the subscripts is a map $\varphi:\{0,\ldots,N-1\}\to \mathbb{Z}$ which satisfies the following two conditions for $\ell\in\{0,1,\ldots,n-1\}$:
\begin{itemize}
\item[(i)] 
$\big\{\varphi(\ell),\varphi(\ell+n),\ldots,\varphi(\ell+(k-1)n)\big\}=
\big\{\varphi(k\ell),\varphi(k\ell+1),\ldots,\varphi(k\ell+(k-1))\big\}$.
\item[(ii)]  $\varphi(k\ell)<\varphi(k\ell+1)<\cdots<\varphi(k\ell+(k-1))$.
\end{itemize}
\end{definition} 

The first condition says  the weight of the entries in the column and the weight of the entries in a block it maps to are equal.  The second condition says  the weights are distinct and increasing in a block (and combined with the first condition says  the weights of the column are also distinct).

As an example for $N=12$ and $k=3$ one possible weight function is given below.
\[
\begin{array}{||c|c|c|c|c|c|c|c|c|c|c|c|c||} \hline\hline
n&0&1&2&3&4&5&6&7&8&9&10&11\\ \hline
\varphi(n)&0&1&2&0&1&2&0&1&2&0&1&2 \\ \hline\hline
\end{array}
\]
If we now take this weight function and replace all the entries in the above diagram with the corresponding weights then our diagram now becomes the following (for which it is easy to check  the two conditions are satisfied).

\bigskip

\noindent\hfil\includegraphics[scale=0.9]{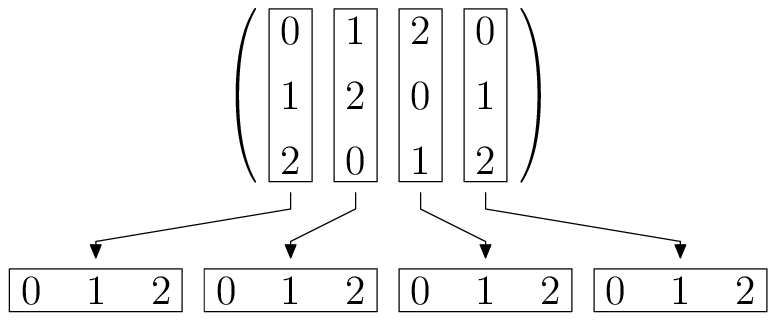}\hfil

\bigskip

We can use this weight function to map the subscripts to the subscripts in a bijective manner so  the weight is preserved.  For instance in the third column we have $\{2,6,10\}\to\{6,7,8\}$.  Since $\varphi(2)=2=\varphi(8)$, $\varphi(6)=0=\varphi(6)$ and $\varphi(10)=1=\varphi(7)$ then we would have $2\to 8$, $6\to 6$ and $10\to 7$.  Repeating this for each column/block combination we can now break up the elements of $\{0,1,\ldots,N-1\}$ into cycles, and we can place these cycles into a poset structure where the height of a cycle is the weight of a subscript in the cycle (by construction they are all equal).  So for our particular weight function we would end up with three levels with the following cycles.

\bigskip

\noindent\hfil\includegraphics[scale=0.9]{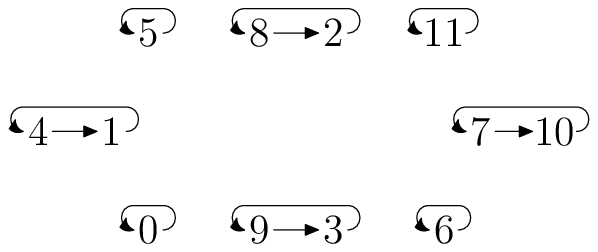}\hfil

\bigskip

This diagram essentially represents half of the shuffling (i.e., how we move from columns to blocks in the concatenation).  We also need to represent the other half of the shuffling, which is sorting among the columns.  To do this we add edges between levels in the diagram connecting any two subscripts which appear in the same column.  Doing this gives us the following diagram (which we will refer to as the {\em shuffling poset}\/).

\bigskip

\noindent\hfil\includegraphics[scale=0.9]{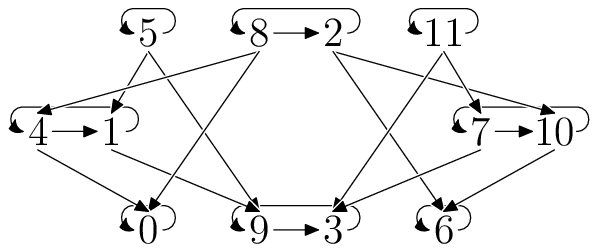}\hfil

\bigskip

The important part about the shuffling poset is that the higher ordered cards (or by analogy heaviest cards) will always try to ``sink down'', i.e., the highest label in a column will want to map to the smallest possible $\varphi$ value which corresponds to the lowest level it can reach, the second highest label will similarly go to the second lowest level it can reach and so on.  In particular, we have the following.

\begin{observation}
Shuffling can be carried out completely in the shuffling poset by placing the corresponding cards in their position and carrying out the following two steps:
\begin{itemize}
\item[(i)] Using only the edges {\em between}\/ levels in the poset swap cards so  no edge connects a higher ordered card at height (or weight) $i$ in the poset with a lower ordered card at height (or weight) $j$ where $i>j$.
\item[(ii)] Using only the edges {\em in}\/ the levels advance the card to the next entry.
\end{itemize}
\end{observation}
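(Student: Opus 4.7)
The plan is to show that the two-step procedure on the shuffling poset reproduces, subscript by subscript, the effect of one shuffle. Step (i) should realize the within-column sort and step (ii) the deterministic rearrangement of a sorted column into its target block; the whole argument is essentially an unpacking of the two defining conditions of a shuffling weight function.

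First I would unpack the shuffle at the subscript level: the $r$-th heaviest card of column $\ell$ (for $r=1,\ldots,k$) ends up at position $k\ell+(r-1)$ of the new stack. Next I would verify step (i). Fix a column $\ell$; its $k$ subscripts $\{\ell,\ell+n,\ldots,\ell+(k-1)n\}$ form a $k$-clique under the between-level edges. By condition (i) these subscripts carry the same multiset of weights as the target block, and condition (ii) forces those weights to be strictly increasing, hence distinct, say $w_0<w_1<\cdots<w_{k-1}$. The rule in step (i) --- no between-level edge runs from a higher-ordered card at larger weight to a lower-ordered card at smaller weight --- applied across every edge of this clique is equivalent to demanding that the cards in column $\ell$ sit in strict sorted order: heaviest at weight $w_0$, next heaviest at $w_1$, and so on. Since transpositions along the clique edges generate all permutations of the $k$ column cards, this sorted configuration is always attainable, and it is unique because the weights are distinct.

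Next I would verify step (ii). The in-level edges encode the weight-preserving bijection $\sigma$ sending the weight-$w$ position of column $\ell$ to the weight-$w$ position of target block $\ell$; existence and uniqueness come directly from conditions (i) and (ii). By condition (ii) the weight-$w_r$ position of the target block is exactly $k\ell+r$. Composing the two steps: after step (i) the weight-$w_r$ node of column $\ell$ carries the $(r+1)$-st heaviest card of that column, and step (ii) transports it to position $k\ell+r$ --- precisely where the shuffle puts the $(r+1)$-st heaviest card. Since this holds for every column and every rank $r$, the two outputs coincide.

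No step in this verification is a serious obstacle; the one place that wants a line of care is confirming that the full $k$-clique of between-level edges inside a column allows every in-column rearrangement of cards, not just adjacent swaps, so that the sorted configuration demanded by step (i) is always executable. This is immediate since every pair of column subscripts is connected by a between-level edge.
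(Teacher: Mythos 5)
Your verification is correct and follows the same line of reasoning the paper uses: the paper states this as an observation with only the informal justification that heavy cards ``sink'' to the lowest reachable weight within their column (your step (i) analysis, using that each column is a clique of between-level edges with distinct weights) and that the in-level cycle edges give the weight-preserving bijection from column positions to block positions (your step (ii)). The only nitpick is that the sorted configuration in step (i) is unique only up to permuting equal labels, but the paper explicitly declares that order immaterial, so nothing is lost.
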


\subsection{Generating a shuffling weight function}\label{algo}
We now see that once we have a shuffling weight function we can generate the shuffling poset which completely describes the shuffling.  We now show  a shuffling weight function always exists by giving an algorithm which will generate one.
\begin{itemize}
\item[(1)] Set ${\tt weight}={\tt 0}$ and construct a ${\tt k}{\times}{\tt n}$ array where the element in the ${\tt (i,j)}$th entry of the array (${\tt 0}\leq {\tt i}\leq {\tt k-1}$ and ${\tt 0}\leq {\tt j}\leq {\tt n-1})$ is ${\tt a}:{\tt b}={\tt (i+kj)}:{\tt\big((i+kj)\%n)}$, where ${\tt s\%t}$ is the remainder when ${\tt s}$ is divided by ${\tt t}$.
\item[(2)] While there are still cells in the array which have not been crossed off do the following:
\begin{itemize}
\item[(i)] Construct a directed graph on the vertices ${\tt \{0,1,\ldots,n-1\}}$ by letting ${\tt j\to k}$ where ${\tt j}$ is the column and ${\tt k}$ is the ${\tt b}$ entry in the {\em lowest}\/ cell of column ${\tt j}$ which has not been crossed out.  (If all the cells in column ${\tt j}$ have been crossed out then ${\tt j}$ will be an isolated node.)
\item[(ii)] Find all directed cycles in the graph that was constructed.  For each edge in the directed cycle let $\varphi({\tt a})={\tt weight}$, where ${\tt a}$ comes from the cell that generated the edge.
\item[(iii)] Cross out all cells which were used to assign a weight.
\item[(iv)] Increase ${\tt weight}$ by ${\tt 1}$.
\end{itemize}
\end{itemize}

An example of the algorithm being carried out when $N=32$ and $k=4$ is shown in the Appendix.  (The reader is encouraged to try this algorithm out to generate the weight function for $N=12$ and $k=3$ given above and for $N=24$ and $k=6$ given in Section~\ref{sec:conclusion}.)

\begin{theorem}
The above algorithm generates a shuffling weight function.
\end{theorem}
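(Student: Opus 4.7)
The plan is to verify three things: that the algorithm terminates (so that every element of $\{0,\ldots,N-1\}$ receives a weight), and that the resulting $\varphi$ satisfies conditions~(i) and~(ii). The key tool is a conservation invariant reflecting the two roles each index $v\in\{0,\ldots,n-1\}$ plays---as an array column (the block $\{kv,\ldots,kv+k-1\}$) and as a possible $b$-value (the shuffling column $\{v,v+n,\ldots,v+(k-1)n\}$).

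For each $v$, let $L(v)$ denote the number of uncrossed cells in array column $v$, and $R(v)$ the number of uncrossed cells whose $b$-entry equals $v$. Initially $L(v)=R(v)=k$. I claim the invariant $L(v)=R(v)$ is preserved under every round: when a directed cycle $j_1\to j_2\to\cdots\to j_s\to j_1$ is processed, the $s$ cells crossed off are such that the cell on edge $j_t\to j_{t+1}$ lies in array column $j_t$ and has $b$-value $j_{t+1}$, so the decrements to $L$ land on the multiset $\{j_1,\ldots,j_s\}$ and the decrements to $R$ land on $\{j_2,\ldots,j_s,j_1\}$, which are equal as sets. Termination follows at once: from any active vertex $v$ (one with $L(v)>0$), the algorithm's outgoing edge $v\to w$ is supplied by a cell whose $b$-value is $w$, so $R(w)>0$, hence $L(w)>0$ by the invariant, and $w$ is also active. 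The induced subgraph on active vertices therefore has out-degree exactly one at every vertex, and any finite such graph contains a directed cycle, so each round crosses off at least one cell.

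The same accounting gives condition~(i). Fix $v$. A weight is added to the multiset of block-$v$ weights in exactly those rounds in which $v$ lies on a cycle (via its outgoing edge), and a weight is added to the multiset of shuffling-column-$v$ weights in exactly those same rounds (via $v$'s incoming edge on the very same cycle). The two multisets of rounds therefore coincide, yielding equality of the two weight multisets in condition~(i). For condition~(ii), within each array column $j$ the algorithm always picks the uncrossed cell of smallest $i$, equivalently the smallest $a=i+kj$; because that cell must be crossed off before its successor in the column becomes eligible, and the weight counter is strictly incremented between rounds, the weights assigned within the column satisfy $\varphi(kj)<\varphi(kj+1)<\cdots<\varphi(kj+k-1)$.

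The main obstacle is the termination step: without the conservation invariant there is no a priori reason that the functional graph built each round must contain a cycle, and one can imagine all active out-edges draining into inactive vertices, leaving a DAG and stalling the algorithm. Once $L(v)=R(v)$ is in place, conditions~(i) and~(ii) reduce to straightforward bookkeeping about which rounds each cell is processed in.
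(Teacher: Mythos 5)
Your proof is correct and takes essentially the same route as the paper: termination because the directed graph restricted to non-exhausted columns has out-degree one and hence contains a cycle, condition (i) because each vertex on a processed cycle simultaneously contributes one outgoing cell (block) and one incoming cell (shuffling column) at the same weight, and condition (ii) because each array column loses at most one cell per round, taken in increasing order of $a$. The only difference is cosmetic: you make explicit the conservation invariant $L(v)=R(v)$ that the paper justifies only with the remark that ``each column is referred to exactly $k$ times.''
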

\begin{proof}
First let us observe that the algorithm will terminate so  $\varphi$ is a well defined function.  This follows since each column is referred to exactly $k$ times and so if all of the entries in a column have been crossed out no edge will be directed towards it.  In particular, if we ignore isolated vertices then the directed graph generated in step 2i will have all vertices with outdegree $1$ and so it must contain at least one directed cycle (in fact one directed cycle for each connected component).  Therefore at every stage we will continue to cross out cells unless all cells have been crossed out so the algorithm will terminate.

The first condition of a shuffling weight function (that weights in the blocks match weights in the columns) is satisfied because we are pulling out cycles.  Namely, if we look at a cell ${\tt a}:{\tt b}$ in the ${\tt j}$th column, this can be understood as the subscript ${\tt a}$ will be placed into the ${\tt b}$th column (when shuffling).  What we need is to make sure  there is some subscript which will go into the ${\tt j}$th column that also will be assigned the same weight.  But this happens because we assign the entire directed cycle the same weight and by step 2ii the vertex preceding ${\tt j}$ will give a subscript that maps into the ${\tt b}$th column.

The second condition of a shuffling weight function (that weights in each block are increasing) is easily satisfied since the blocks form the columns of the array generated in step 1, and in columns the weights assigned to the cells are increasing  (since in each round at most one cell in each column will be assigned the current weight.)
\end{proof}

The algorithm used the lowest cell which had not been crossed out.  We could also have used the highest cell which was not crossed out and then decreased the weight by $1$.  This will essentially generate the same picture (where we rotated the arrays by $180$ and let ${\tt i}\to {\tt N-1-i}$ in the entries and in the directed graphs generated in step 2i of the algorithm.

Another important thing to note is  the directed cycles  we are pulling out are the same directed cycles (but with the ${\tt b}$ terms replaced by the ${\tt a}$ terms from the corresponding cells) found in the shuffling poset.  If we combine this with the previous idea of going from bottom to top we get the following observation.

\begin{observation}
In the shuffling poset  $i\to j$ if and only if $N-1-i\to N-1-j$.
\end{observation}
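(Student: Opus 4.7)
The observation says that the involution $\rho : a \mapsto N-1-a$ is an automorphism of the shuffling poset. My plan is to reduce it to a symmetry property of the weight function $\varphi$ generated by the algorithm, and then to obtain that property from the algorithm's 180-degree rotational symmetry.

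To reduce, I would unpack what it means for $i \to j$ to be an edge of the shuffling poset. Such an edge is either a between-level edge (subscripts lying in the same input column, i.e.\ $i \equiv j \pmod n$) or a within-level cycle edge. Unpacking the cycle extraction, one sees that a cycle edge $i \to j$ holds precisely when $\lfloor j/k \rfloor = i \bmod n$ (so that $j$, as an output position in block $\lfloor j/k \rfloor$, corresponds to $i$, which lies in input column $i \bmod n$) together with $\varphi(i) = \varphi(j)$. The between-level relation is immediately $\rho$-invariant since $\rho(i)-\rho(j) = j-i$, and the column/block condition is $\rho$-invariant by elementary arithmetic: $\lfloor (N-1-j)/k \rfloor = n-1-\lfloor j/k \rfloor$ and $(N-1-i) \bmod n = n-1-(i \bmod n)$. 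So the observation reduces to showing that the level-sets of $\varphi$ are closed under $\rho$.

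For this, I would invoke the rotational symmetry of the algorithm mentioned in the paragraph preceding the observation. The 180-degree rotation $\sigma : (r,c) \mapsto (k-1-r, n-1-c)$ of the $k \times n$ array sends the $a$-entries to $N-1-a$ and the $b$-entries to $n-1-b$; a short modular arithmetic check based on the identity $k(n-1-c) \equiv n-1-((k-1+kc)\bmod n) \pmod n$ shows that, under the vertex relabeling $c \mapsto n-1-c$, the directed graph constructed by the bottom-up algorithm at each step coincides with the directed graph constructed by the top-down variant (the one using the highest uncrossed cell and decrementing the weight). By a joint induction on the states of the two variants, with the $\sigma$-correspondence as invariant, the sets of cells crossed out after the $w$-th step of each variant are $\sigma$-images of each other. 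Consequently, the cells assigned weight $w$ by the bottom-up algorithm are the $\rho$-images of the cells assigned weight $W-1-w$ by the top-down algorithm, where $W$ is the total number of weights used.

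The main obstacle is then to deduce that the $\rho$-image of a bottom-up level-set is again a bottom-up level-set. The cleanest route is to show that the bottom-up and top-down variants in fact produce identical weight functions $\varphi$, which I expect to follow from the same joint induction on cross-out states. Granting this, the level-sets at weights $w$ and $W-1-w$ are $\rho$-images of each other, so the set of cycle edges is $\rho$-closed, and combined with the earlier reductions the observation follows.
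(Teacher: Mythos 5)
Your reduction is sound: a cycle edge $i\to j$ of the shuffling poset is exactly the conjunction of $\lfloor j/k\rfloor = i\bmod n$ with $\varphi(i)=\varphi(j)$; the between-level edges and the column/block condition are invariant under $\rho(a)=N-1-a$ by the arithmetic you give; and the $180^\circ$ correspondence between the bottom-up and top-down runs (your joint induction with invariant $\sigma$) is precisely the remark the paper makes just before the observation. The gap is in your last step. The joint induction yields only that the cells assigned weight $w$ by the bottom-up run are the $\rho$-images of the cells assigned weight $W-1-w$ by the top-down run, i.e.\ (with a suitable normalization) $\varphi_{down}(N-1-a)=W-1-\varphi_{up}(a)$. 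This compares the two variants at $\rho$-\emph{related} arguments and says nothing about how they compare at the \emph{same} argument, so ``$\varphi_{up}=\varphi_{down}$'' does not follow from that induction. Worse, combined with the displayed identity, $\varphi_{up}=\varphi_{down}$ is equivalent to $\varphi_{up}(a)+\varphi_{up}(N-1-a)$ being constant in $a$ --- exactly the symmetry property that the paper states as an open \emph{Conjecture} immediately after this observation, and the reason it resorts to $\varphi_{up}+\varphi_{down}$ to manufacture a symmetric weight function. A proof of the observation cannot route through a statement equivalent to that conjecture without actually proving it, and you offer no mechanism that would.

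The paper avoids this by working one level lower. What must be $\rho$-invariant is not the level-set partition of $\varphi$ (still less its integer values) but only the column-to-block \emph{matching}, equivalently the set of directed cycles extracted in step 2(ii) of the algorithm, which the paper identifies with the cycles of the shuffling poset. The rotation correspondence shows the top-down variant extracts exactly the $\rho$-images of the bottom-up cycles, and the paper's assertion that the two variants ``generate the same picture'' is a claim about these extracted cycles agreeing --- a strictly weaker statement than agreement of the weight functions, since distinct cycles may share a weight and the actual values (where your argument gets stuck) never need to coincide. To repair your write-up, replace ``identical weight functions'' by ``identical induced matchings'' and argue that the matching is forced by the column/block incidence constraints independently of which variant (or which admissible weight function) is used.
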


Something else worth noting is that if we look at the shuffling weight function produced in the Appendix we see there is some symmetry, i.e., $\varphi(i)+\varphi(31-i)$ is independent of $i$.  So for example we have $\varphi(2)+\varphi(29)=7=\varphi(12)+\varphi(19)$.  When this happens we will call this a {\em symmetrical}\/ weight function. 

\begin{conjecture}
The shuffling weight function produced by the algorithm is symmetric.
\end{conjecture}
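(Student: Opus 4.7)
The approach is to relate $\varphi$ to the weight function $\tilde\varphi$ produced by the ``reverse'' version of the algorithm (use the highest uncrossed cell at each stage, with weight starting at $0$), as indicated in the paragraph preceding the conjecture. Using the observation that $i\to j$ in the shuffling poset if and only if $N-1-i\to N-1-j$, one checks by a round-by-round induction that the functional graph built by the reverse algorithm in round $r$ is the image under $j\mapsto n-1-j$ of the forward algorithm's round-$r$ graph: the involution $(i,j)\mapsto(k-1-i,n-1-j)$ exchanges ``lowest'' and ``highest'' uncrossed cells, and the mod-$n$ arithmetic cooperates to send each edge $j\to b$ of the forward graph to the edge $n-1-j\to n-1-b$ of the reverse graph. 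Consequently $\tilde\varphi(a)=\varphi(N-1-a)$.

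Under this relation, the conjecture $\varphi(a)+\varphi(N-1-a)=W$ (with $W=\max\varphi$) becomes $\varphi(a)+\tilde\varphi(a)=W$; equivalently, the cells extracted by the forward algorithm in round $r$ coincide with those extracted by the reverse algorithm in round $W+2-r$. Writing $s_r(j)$ and $\tilde s_r(j)$ for the number of cells extracted from column $j$ after $r$ rounds of the respective algorithm, and using the mirror relation $\tilde s_r(j)=s_r(n-1-j)$, this is equivalent to the ``column-complementarity'' invariant
\[
s_r(j)+s_{W+1-r}(n-1-j)=k\qquad\text{for all $j$ and $r$,}
\]
asserting that forward after $r$ rounds (extracting from the bottom) and reverse after $W+1-r$ rounds (extracting from the top) together account for each column exactly once. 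The case $r=0$ is immediate, and the inductive step reduces to showing that $j$ is a cycle vertex of $G_{r+1}$ if and only if $n-1-j$ is a cycle vertex of $G_{W+1-r}$.

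The main obstacle is precisely this inductive step. A direct computation shows the edges of $G_{r+1}$ agree, after the mirror, with those of $G_{W+1-r}$ exactly at cycle vertices but differ by a shift of $\pm1$ at tree vertices; since ``being a cycle vertex'' is what we are trying to determine, a naive induction on $r$ is self-referential. My proposed route is to first establish an auxiliary monotonicity lemma: once column $j$ appears as a cycle vertex of some $G_r$, it remains a cycle vertex of every subsequent $G_{r'}$ until all $k$ of its cells have been extracted. The lemma was verified in every example I computed, and if true it forces $\varphi$ to take the clean form $\varphi(i+kj)=f(j)+i$, where $f(j)$ is the first round in which column $j$ enters a cycle, so that the conjecture reduces to the combinatorial identity $f(j)+f(n-1-j)=\max_{j'}f(j')$. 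Evaluating the state invariant at $r=f(j)$ and at $r=f(j)+k$ using the resulting explicit formula $s_r(j)=\max(0,\min(k,r-f(j)))$ shows that this palindromic identity for $f$ is itself equivalent to the state invariant, so that after the monotonicity lemma only this single symmetry of $f$ remains to be checked --- most plausibly by exploiting the fact that the forward algorithm applied to the mirrored array equals the reverse algorithm on the original. In my view, proving the monotonicity lemma, which controls the deterministic update of the cycle structure from one round to the next, is the principal technical difficulty.
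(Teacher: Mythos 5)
This statement is an open \emph{conjecture} in the paper: the authors prove only the weaker fact that \emph{some} symmetric shuffling weight function exists (by summing $\varphi_{up}$ and $\varphi_{down}$), and they offer no proof that the algorithm's own output is symmetric. So there is no proof in the paper to compare yours against, and your submission is in any case not a proof but a proposed strategy: you explicitly defer both the ``monotonicity lemma'' and the final palindromic identity $f(j)+f(n-1-j)=\max_{j'}f(j')$.

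The more serious problem is that your monotonicity lemma is false, and the paper itself contains the counterexample. For $N=24$ and $k=6$ the concluding remarks list the algorithm's output, and on block $0$ (which is column $0$ of the array) the weights are $0,1,4,5,6,7$. Since the algorithm assigns weight $r$ in round $r$ and at most one cell per column per round, this says column $0$ is a cycle vertex in rounds $0$ and $1$, is \emph{not} a cycle vertex in rounds $2$ and $3$, and then re-enters a cycle in round $4$ --- exactly the behavior your lemma forbids. Consequently the clean form $\varphi(i+kj)=f(j)+i$ fails (the paper explicitly flags these ``gaps'' in the blocks as the key unexplained phenomenon), and your entire reduction to a single identity for $f$ collapses. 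The first stage of your argument --- identifying the reverse algorithm with the mirror $i\mapsto N-1-i$ of the forward one, so that symmetry becomes a complementarity statement between the two extraction orders --- agrees with the paper's own remarks and is a reasonable reformulation, but it only restates the conjecture; the inductive step you correctly identify as self-referential is precisely where the difficulty lives, and the shortcut you propose through it is blocked by the $N=24$, $k=6$ example.
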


It is easy to show  a symmetric weight function always exists for every $N$ and $k$.  For instance let $\varphi_{up}$ be the weight function generated by the given algorithm (where we use the lowest cells and work our way up) and let $\varphi_{down}$ be the weight function generated by the modified algorithm (where we use the highest cells and work our way down).  Then the weight function $\varphi=\varphi_{up}+\varphi_{down}$ is still a shuffling weight function and by the symmetry of the two algorithms will be symmetric.

\section{A simple weight function when $\gcd(q,k)=1$}\label{sec:gcd}
In the preceding section we saw that a weight function could be used to give us the shuffling poset, which in turn gives us another representation of how to shuffle.  One natural question to ask is what can we say about the lengths of the directed cycles in the shuffling poset.  In this section we will show in the special case when $N=k^tq$ and $\gcd(q,k)=1$  this question has an easy answer.   Note  this will cover all the cases when $k$ is a prime (in particular $k=2$).  First, for $\gcd(q,k)=1$ we observe  one shuffling weight function can be found using the base $k$ expansion of subscripts.

\begin{lemma}\label{lem:qk}
Let $N=kn=k^tq$ with $\gcd(q,k)=1$ and let $\ldots A_tA_{t-1}\ldots A_1A_0$ be the base $k$ expansion of $A$.  Then  $\varphi(A)=A_0+\cdots+A_{t-1}$ is a shuffling weight function.
\end{lemma}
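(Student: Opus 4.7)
The plan is to verify the two defining properties of a shuffling weight function directly from the base-$k$ expansion, exploiting the fact that $\gcd(q,k)=1$ means multiplication by $q$ is a bijection modulo $k$.

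First I would set up notation. For $\ell \in \{0,1,\ldots,n-1\} = \{0,1,\ldots,k^{t-1}q-1\}$, write $\ell = L + Mk^{t-1}$ with $0 \le L < k^{t-1}$ and $0 \le M < q$. Then the base-$k$ digits of $\ell$ in positions $0,1,\ldots,t-2$ are exactly the digits $L_0,L_1,\ldots,L_{t-2}$ of $L$; I will abbreviate $s(L) = L_0 + L_1 + \cdots + L_{t-2}$.

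Condition (ii) is the easy observation. For $i \in \{0,1,\ldots,k-1\}$, the number $k\ell+i$ has base-$k$ expansion obtained by shifting the digits of $\ell$ up one position and inserting $i$ in position $0$. Hence the digits in positions $0$ through $t-1$ of $k\ell+i$ are $i,L_0,L_1,\ldots,L_{t-2}$, giving $\varphi(k\ell+i) = i + s(L)$, which is strictly increasing in $i$.

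Condition (i) is where the hypothesis $\gcd(q,k)=1$ is actually used. Since $n = k^{t-1}q$, I compute
\[
\ell + jn \;=\; L + (M+jq)k^{t-1}
\]
for $j = 0,1,\ldots,k-1$. Because $L < k^{t-1}$, the digits in positions $0$ through $t-2$ of $\ell+jn$ are just $L_0,\ldots,L_{t-2}$, and the digit in position $t-1$ is $(M+jq) \bmod k$, regardless of any carry occurring at position $t$ or higher (those positions do not enter $\varphi$). Therefore
\[
\varphi(\ell+jn) \;=\; s(L) + \bigl((M+jq) \bmod k\bigr).
\]
As $j$ runs over $\{0,1,\ldots,k-1\}$, the map $j \mapsto (M+jq) \bmod k$ is a bijection onto $\{0,1,\ldots,k-1\}$ precisely because $\gcd(q,k)=1$. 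Hence
\[
\bigl\{\varphi(\ell+jn) : j=0,\ldots,k-1\bigr\} \;=\; \bigl\{s(L), s(L)+1, \ldots, s(L)+(k-1)\bigr\},
\]
which by the computation above equals $\{\varphi(k\ell),\varphi(k\ell+1),\ldots,\varphi(k\ell+(k-1))\}$. This verifies condition (i) and completes the verification.

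The main obstacle, if any, is being careful about what the base-$k$ expansion of $\ell+jn$ looks like above position $t-1$: $M+jq$ can exceed $k$, producing carries into positions $\ge t$, but these are irrelevant to $\varphi$, which only sums the bottom $t$ digits. Once one notices this, the argument reduces to the bijectivity of multiplication by $q$ modulo $k$.
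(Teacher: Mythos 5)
Your proof is correct and follows essentially the same route as the paper's: both arguments observe that the bottom $t-1$ digits of $\ell+jn$ agree with those of $\ell$, that the digit in position $t-1$ runs through all residues modulo $k$ because $\gcd(q,k)=1$, and that condition (ii) is immediate from the digit-shift description of $k\ell+i$. Your explicit decomposition $\ell=L+Mk^{t-1}$ is a slightly cleaner way of handling the carries that the paper absorbs into its ``$\ldots$'' notation, but the underlying idea is identical.
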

\begin{proof}
Let $\ell\in\{0,1,\ldots,n-1\}$ and $\ldots L_tL_{t-1}\ldots L_1L_0$ be the base $k$ expansion of $\ell$, and let the base $k$ expansion of $n$ be $\ldots u0\ldots00$ where $u\neq 0$ is in the $(t-1)$th slot and $\gcd(u,k)=\gcd(q,k)=1$ (this follows from $n=k^{t-1}q$).

In particular, for $i\in\{0,\ldots,k-1\}$ the base $k$ expansion of $k\ell+i$ will be
\[
\ldots L_{t-1}L_{t-2}\ldots L_0i
\]
which has $\varphi(k\ell+i)=i+L_0+\cdots+L_{t-2}$.  On the other hand the base $k$ expansion of $\ell +in$ will be
\[
\ldots \big((L_{t-1}+iu)\%k\big)L_{t-2}\ldots L_1L_0
\]
which has $\varphi(\ell+iu)=\big((L_{t-1}+iu)\%k\big)+L_0+\cdots+L_{t-2}$ (where $s\%t$ is the remainder of $s$ when divided by $t$).  Because $\gcd(u,k)=1$ then $(L_{t-1}+iu)\%k$ will cycle through all $k$ possibilities as $i$ goes from $0$ to $k-1$.  It follows  the first condition of a shuffling weight function is satisfied.

The second condition of a shuffling weight function is satisfied since the base $k$ expansion of $k\ell+i$ for $i\in\{0,\ldots,k-1\}$ is $\ldots L_{t-1}L_{t-2}\ldots L_0i$ and so
\[
\varphi(k\ell+i)=i+L_0+\cdots+L_{t-2}.
\]
From this it follows  $\varphi(k\ell)<\varphi(k\ell+1)<\cdots<\varphi(k\ell+(k-1))$.
\end{proof}

One nice feature about this case is the the rule for mapping subscripts is easy to describe.  Namely, if the base $k$ expansion of $A$ is $\ldots A_{t-1}A_{t-2}\ldots A_1A_0$ then the map is
\begin{equation}\label{eq:map}
A\to kA+A_{t-1}\pmod{N}.
\end{equation}
(There are two things to check, one is that both these terms have the same weight and the other is that the column that contains $A$ will map to the block that contains $kA+A_{t-1}$, both conditions are easily checked.)

We will now use this map to determine the cycle lengths in the shuffling poset.

\begin{theorem}\label{thm:period}
Let $N=k^tq$ with $\gcd(k,q)=1$, and let $\order_k(s)$ denote the multiplicative order of $k$ modulo $s$.  Then the length of a cycle in the shuffling poset when we divide $N$ into $k$ equal stacks for shuffling is a divisor of $\order_k(N-q)$.  Further, there is a cycle of length $\order_k(N-q)$ in the shuffling poset.
\end{theorem}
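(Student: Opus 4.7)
The plan is to work directly with the explicit map $\sigma(A) = kA + A_{t-1} \pmod N$ from equation (\ref{eq:map}) and analyze its iterates. Write every subscript uniquely as $A = A'k^t + A''$ with $0 \le A'' < k^t$ and $0 \le A' < q$, and set $d = \order_k(N-q)$. Since $N - q = q(k^t - 1)$ and the order of $k$ modulo $k^t-1$ is exactly $t$, one immediately has $t \mid d$; write $d = tm^*$. The crux of the argument is the closed form
\[
\sigma^{mt}(A) \equiv k^{mt}A + \frac{k^{mt}-1}{k^t-1}\,A'' \pmod N,
\]
proved by induction on $m$. For the base I observe that reducing a single application $\sigma(A) = kA + A_{t-1}$ modulo $k^t$ is unaffected by the mod-$N$ wraparound (which subtracts multiples of $N = k^tq$), so the lower $t$ digits of $\sigma(A)$ are a cyclic left shift of those of $A$; iterating, $\sigma^j(A)_{t-1} = A_{t-1-j}$ for $0 \le j \le t-1$, hence $\sigma^t(A) \equiv k^tA + A_{t-1}k^{t-1} + \cdots + A_0 = k^tA + A'' \pmod N$. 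The inductive step uses $\sigma^{mt}(A) \equiv A'' \pmod{k^t}$, read off from the formula.

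For part (a), apply the formula with $m = m^*$. Since $N - q = q(k^t - 1)$ divides $k^{tm^*} - 1$, we may write $(k^{tm^*}-1)/(k^t - 1) = qu$ for some integer $u$. Then
\[
\sigma^{tm^*}(A) - A \equiv (k^{tm^*}-1)A + qu\,A'' \equiv qu\bigl[(k^t-1)A + A''\bigr] \equiv qu\cdot k^t\bigl[(k^t-1)A' + A''\bigr] \equiv 0 \pmod N,
\]
using the identity $(k^t-1)A + A'' = k^t[(k^t-1)A' + A'']$ together with $quk^t = Nu$. Hence $\sigma^d$ is the identity on subscripts, and every cycle length divides $d$.

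For part (b), I will show that the orbit of $A = 1$ has length exactly $d$. Plugging $A = 1$, $A'' = 1$ into the closed form and requiring $\sigma^{mt}(1) \equiv 1 \pmod N$ simplifies (using $\gcd(k^t,q)=1$) to $k^{mt} \equiv 1 \pmod{N-q}$, so the smallest positive multiple of $t$ with this property is $d$. To rule out periods that are not multiples of $t$, suppose $\sigma^{mt+j}(1) \equiv 1 \pmod N$ with $0 < j < t$ (vacuous when $t=1$). Since $\sigma^{mt}(1) \equiv 1 \pmod{k^t}$, its digit at position $t-1$ is $0$, and a straightforward induction then shows $\sigma^{mt+i}(1) \equiv k^i\sigma^{mt}(1) \pmod N$ for $0 \le i \le t-1$, because the digit at position $t-1$ of each intermediate iterate equals the digit at position $t-1$ of $k^i$, which is $0$ for $i \le t-2$. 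Setting $k^j\sigma^{mt}(1) \equiv 1 \pmod N$ and reducing modulo $k$ (a divisor of $N$) yields $0 \equiv 1 \pmod k$, a contradiction. Hence the orbit of $1$ has length exactly $d$.

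The main obstacle is keeping the digit-based bookkeeping rigorous in the presence of mod-$N$ reductions: once the cyclic-shift structure on the lower $t$ digits is properly isolated (using only that $k^t \mid N$), both parts reduce to routine algebraic manipulation built around the single observation that $N - q = q(k^t-1)$.
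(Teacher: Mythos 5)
Your proposal is correct and follows essentially the same route as the paper: iterate the map $A\to kA+A_{t-1}\pmod N$ from \eqref{eq:map} in blocks of $t$ steps, derive the closed form $\sigma^{mt}(A)\equiv k^{mt}A+\frac{k^{mt}-1}{k^t-1}A''\pmod N$ (the paper writes the sum $\sum_i k^{it}A'$ instead of the quotient, and verifies divisibility by multiplying the congruence through by $k^t-1$ rather than by your direct factoring $(k^t-1)A+A''=k^t[(k^t-1)A'+A'']$), and then specialize to $x=1$ to exhibit a cycle of full length. The one place you go beyond the paper is in explicitly ruling out return times of $1$ that are not multiples of $t$ via the digit-shift argument --- a point the paper's proof passes over silently --- so your write-up is, if anything, slightly more complete.
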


Before we begin we note  by our assumption that $\gcd(N-k,q)=1$ and so the multiplicative order is well defined.  As a check we note  $\order_3(12-4)=\order_3(8)=2$ since $3^2=9\equiv 1\pmod{8}$, this agrees with the diagram given above for $N=12$ and $k=3$.

\begin{proof}
Consider a cycle  starting at $x$, and suppose that the base $k$ expansion of $x$ is $\ldots A_{t-1}\ldots A_1A_0$.  Using \eqref{eq:map} we have after $t$ steps we will be at
\begin{eqnarray*}
x&\rightarrow &kx+A_{t-1}\pmod{N}\\
&\rightarrow &k^{2}x+kA_{t-1}+A_{t-2}\pmod{N}\\
&\rightarrow &k^{3}x+k^{2}A_{t-1}+kA_{t-2}+A_{t-3}\pmod{N}\\
&\rightarrow &\cdots\\
&\rightarrow &k^{t}x+\underbrace{\sum_{i=0}^{t-1}k^{i}A_{i}}_{=A'}\pmod{N}.
\end{eqnarray*}
In particular, after we have taken $t$ steps, the last $t$ terms in the base $k$ expansion will agree with the last $t$ terms in the base $k$ expansion of $x$.  Now suppose  we repeat this $r$ times (so a total of $rt$ steps).  Then we have
\begin{eqnarray*}
x&\rightarrow &k^tx+A'\pmod{N}\\
&\rightarrow &k^{2t}x+k^tA'+A'\pmod{N}\\
&\rightarrow &k^{3t}x+k^{2t}A'+k^tA'+A'\pmod{N}\\
&\rightarrow &\cdots\\
&\rightarrow &k^{rt}x+\sum_{i=0}^{r-1}k^{it}A'\pmod{N}.
\end{eqnarray*}
For some $r$ we will be back where we started if
\[
k^{rt}x+\sum_{i=0}^{r-1}k^{it}A'\equiv x \pmod{N=k^tq}.
\]
Multiplying both sides by $k^t-1$ and simplifying this is equivalent to
\[
(k^{rt}-1)\big(x(k^t-1)+A'\big)\equiv 0\pmod{(k^t-1)k^tq}
\]
Looking at the base $k$ expansion of $x$ we have $x=A'+mk^t$ for some $m$, if we now substitute this in and simplify we get
\[
(k^{rt}-1)k^t\big(A'+m(k^t-1)\big)\equiv 0\pmod{(k^t-1)k^tq}
\]
or
\begin{equation}\label{eq:mod}
(k^{rt}-1)\big(A'+m(k^t-1)\big)\equiv 0\pmod{(k^t-1)q}.
\end{equation}
Since $(k^t-1)q=N-q$, if $rt=\order_k(N-q)$ then $k^{rt}-1\equiv 0\pmod{(k^t-1)q}$ and this condition is satisfied.  In particular, after taking $\order_k(N-q)$ steps then $x\rightarrow  x$ for each value of $x$, and so each cycle must be some divisor of $\order_k(N-q)$.

We are implicitly using  $t\,\big|\, \order_k(N-q)$.  To see why this is true, let $M=\order_k(N-q)$ then $N-q=(k^t-1)q\,\big|\,(k^M-1)$, and so
\[
(k^t-1)\,\big|\,(k^M-1)\mbox{~~so~~}\gcd(k^t-1,k^M-1)=k^t-1,
\]
but we also have  in general
\[
\gcd(k^a-1,k^b-1)=k^{\gcd(a,b)}-1\mbox{~~so~~}\gcd(k^t-1,k^m-1)=k^{\gcd(t,m)}-1.
\]
Combining these two statements we have  $\gcd(t,M)=t$ showing  $t$ is a divisor of $M=\order_k(N-q)$.

Finally, to show  there is a cycle of length $\order_k(N-q)$ we note  \eqref{eq:mod} also holds for $x=1$, which corresponds to $A'=1$ and $m=0$.  So  \eqref{eq:mod} reduces to finding the smallest value of $rt$ so 
\[
k^{rt}\equiv 1\pmod{N-q},
\]
which is clearly $rt=\order_k(N-q)$.
\end{proof}

\subsection{A more generalized weight function}
The preceding weight function relied on having $\gcd(q,k)=1$ (this was used in the lemma to make sure  we hit all of the residue classes modulo $k$, and then in the Theorem~\ref{thm:period} by giving a simple rule for mapping).  We now present a weight function that works in more cases (but from the definition it will agree with the previous weight function when $\gcd(q,k)=1$).  For example, the following weight function will work for the case when $k$ is any prime power.

\begin{lemma}
Let $N=kn=k^tq$ with $\gcd\big(q/\gcd(q,k),\gcd(q,k)\big)=1$ and let $\ldots A_tA_{t-1}A_{t-2}\ldots A_1A_0$ be the base $k$ expansion of $A$.  Then
$\varphi(A)=A_0+\cdots+A_{t-1}+\big(A_t\%\gcd(k,q)\big)$
is a shuffling weight function.
\end{lemma}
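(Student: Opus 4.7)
The plan is to verify the two conditions of a shuffling weight function directly, tracking the new $(A_t\%d)$ term where $d:=\gcd(k,q)$. Writing $k=dk'$ and $q=dq'$, the hypothesis $\gcd(q',d)=1$ combines with the automatic $\gcd(q',k')=1$ (since we divided out the full gcd) to give $\gcd(q',k)=1$; this coprimality drives the argument.

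Condition (ii) falls out essentially as in Lemma~\ref{lem:qk}: if $\ldots L_{t-1}L_{t-2}\ldots L_0$ is the base $k$ expansion of $\ell$, then $k\ell+i$ has expansion $\ldots L_{t-1}\ldots L_0 i$, so its $t$th digit is $L_{t-1}$ regardless of $i$, and $\varphi(k\ell+i)=i+(L_0+\cdots+L_{t-2})+(L_{t-1}\%d)$ is strictly increasing in $i$.

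For condition (i), I would decompose $\ell=\ell_L+k^{t-1}\ell_H$ with $\ell_L<k^{t-1}$ and $0\le\ell_H<q$. Adding $in=ik^{t-1}q$ leaves the low $t-1$ base-$k$ digits of $\ell$ unchanged and replaces the higher digits by the base-$k$ digits of $\ell_H+iq$. Setting $f(a):=(a\%k)+(\lfloor a/k\rfloor\%d)$, one then gets $\varphi(\ell+in)=(L_0+\cdots+L_{t-2})+f(\ell_H+iq)$, and since $d\mid k$ we have $L_{t-1}\%d=\ell_H\%d$, so condition (i) reduces to the set equality
\[
\{f(\ell_H+iq):i=0,\ldots,k-1\}=\{(\ell_H\%d)+j:j=0,\ldots,k-1\}.
\]

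The heart of the proof is this last equality. First observe that $f(a)$ depends on $a$ only modulo $kd$. Next, because $\gcd(q,kd)=d$ (using $\gcd(q',k)=1$), the residues $\{iq\%kd:0\le i<k\}$ are exactly the $k$ multiples of $d$ in $[0,kd)$, so $\{(\ell_H+iq)\%kd\}_i=\{(\ell_H+jd)\%kd\}_{j=0}^{k-1}$. Writing $\ell_H=r+ds$ with $r=\ell_H\%d$, these elements equal $r+d\cdot((s+j)\%k)$ and therefore run bijectively over $\{r+du:u=0,\ldots,k-1\}$. Finally, applying the division algorithm $u=k'v+w$ with $0\le v<d$ and $0\le w<k'$, one computes $f(r+du)=(r+dw)+v$, and the map $(v,w)\mapsto dw+v$ is a bijection onto $\{0,\ldots,k-1\}$, giving exactly the target set. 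The main obstacle is the bookkeeping around the digits at positions $t-1$ and $t$; the key realization that tames it is that $f$ depends on its argument only modulo $kd$ (rather than modulo $k^2$), reducing a potentially unbounded problem to a clean finite count.
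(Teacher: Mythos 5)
Your proof is correct, and while it makes the same initial reduction as the paper's proof --- condition (ii) is immediate from the digit expansion of $k\ell+i$, and condition (i) comes down to showing that the combined contribution of the digits in positions $t-1$ and $t$ of $\ell+in$ sweeps out $k$ consecutive values starting at $L_{t-1}\%\gcd(q,k)$ --- your execution of that key step is genuinely different. The paper works digit by digit: writing $d=\gcd(q,k)$ and $N_{t-1}=q\%k$, it first shows $C_{t-1}=\big((L_{t-1}+iN_{t-1})\%k\big)$ takes $k/d$ values, each exactly $d$ times, and then, for each fixed value of $C_{t-1}$, tracks the carry into position $t$ to show $C_t\%d$ runs over all residues modulo $d$, which is where $\gcd(q/d,d)=1$ enters via the computation $C_t=\big((\widehat{C}_t+mq/d)\%k\big)$. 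You avoid the carry analysis entirely: since $f(a)=(a\%k)+(\lfloor a/k\rfloor\%d)$ is $kd$-periodic, only $(\ell_H+iq)\%kd$ matters; the coprimality $\gcd(q/d,k)=1$ (equivalently $\gcd(q,kd)=d$) makes that orbit exactly the progression $\{r+du:0\le u<k\}$ with $r=\ell_H\%d$; and the division algorithm $u=k'v+w$ shows $f$ maps this progression bijectively onto $\{r,\ldots,r+k-1\}$. What your route buys is a single clean count in $\mathbb{Z}/kd\mathbb{Z}$ in place of the paper's two-stage digit-and-carry argument, and it isolates exactly how the hypothesis is used; the paper's version stays closer to the base-$k$ digit notation of Lemma~\ref{lem:qk} but pays for it with the more delicate bookkeeping around $C_t$. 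I checked the individual steps (the $kd$-periodicity of $f$, the identification of the orbit, and the evaluation $f(r+du)=r+dw+v$) and they all hold.
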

\begin{proof}
For $\ell\in\{0,\ldots,n-1\}$ let $\ldots L_{t}L_{t-1}\ldots L_1L_0$ be the base $k$ expansion of $\ell$.  Then for $i\in\{0,\ldots,k-1\}$, the base $k$ expansion of $k\ell+i$ is $\ldots  L_{t-1}L_{t-2}\ldots L_1L_0i$ and so
\[
\varphi(k\ell+i)=i+L_0+\cdots+L_{t-2}+\big(L_{t-1}\%\gcd(k,q)\big).
\]
Using this, we note  the second condition for a shuffling weight function is easily satisfied.

Since $n=k^{t-1}q$, the base $k$ expansion of $n$ is $\ldots N_tN_{t-1}0\ldots0$.  Also we have  $N_{t-1}=(q\%k)$, and so $\gcd(q,k)=\gcd\big((q\%k),k)=\gcd(N_{t-1},k)$.  So for $i\in\{0,\ldots,k-1\}$ the base $k$ expansion of $\ell+in$ is $\ldots C_tC_{t-1}L_{t-2}\ldots L_0$, i.e., it agrees in the first $t-2$ slots with the expansion of $\ell$, so we need to understand $C_{t-1}+\big(C_t\%\gcd(q,k)\big)$.

For a given value of $i$ we have $C_{t-1}=\big((L_{t-1}+iN_{t-1})\%k\big)$.  Since $N_{t-1}/\gcd(q,k)$ is relatively prime to $k$,  this takes  on $k/\gcd(q,k)$ different values that differ by a multiple of $\gcd(q,k)$ from $A_{t-1}$ (modulo $k$), and we will attain each one of these values $\gcd(q,k)$ times as $u$ ranges over its possible values.  In particular we have,
\begin{multline*}
C_{t-1}\in\bigg\{\big(L_{t-1}\%\gcd(q,k)\big),\big(L_{t-1}\%\gcd(q,k)\big)+\gcd(q,k),\\
\ldots,\big(L_{t-1}\%\gcd(q,k)\big)+\bigg({k\over \gcd(q,k)}-1\bigg)\gcd(q,k)\bigg\}
\end{multline*}
Fix a possible value for $C_{t-1}$ and let $\widehat{i}$ be the first value of $i$ that gives us $C_{t-1}$, and let $\widehat{C}_t$ be the value of $C_t$ for this corresponding $i$.  In particular, the only values of $i$ where we will be at the fixed value of $C_{t-1}$ are $i=\widehat{i}+mk/\gcd(q,k)$ for $m\in\{0,\ldots,\gcd(q,k)-1\}$.  For these values of $u$ we have 
\[
C_{t}=\bigg(\bigg(\widehat{C}_t+m{(N_{t-1}+kN_t)\over \gcd(q,k)}\bigg)\%k\bigg)=\bigg(\bigg(\widehat{C}_t+m{q\over \gcd(q,k)}\bigg)\%k\bigg).
\]
(The last step follows from noting $q=N_{t-1}+kN_t+k^2N_{t+1}+\cdots$ and that when we divide this by $\gcd(q,k)$ all but the first two terms will have at least one factor of $k$.)  So we have
\[
\big(C_t\%\gcd(q,k)\big)=\bigg(\bigg(\widehat{C}_t+m{q\over \gcd(q,k)}\bigg)\%\gcd(q,k)\bigg).
\]
Since $\gcd\big(q/\gcd(q,k),\gcd(q,k)\big)=1$ then this covers all of the residue classes modulo $\gcd(q,k)$.  Combined with what we know about $C_{t-1}$ then we have
\begin{multline*}
C_{t-1}+\big(C_t\%\gcd(q,k)\big)=\\ \big\{\big(L_{t-1}\%\gcd(q,k)\big),\big(L_{t-1}\%\gcd(q,k)\big)+1,\ldots,\big(L_{t-1}\%\gcd(q,k)\big)+k-1\big\}.
\end{multline*}
So we have 
\[
\varphi(\ell+in)=j+L_0+\cdots+L_{t-2}+\big(L_{t-1}\%\gcd(k,q)\big),
\]
where $j$ which ranges over $\{0,\ldots,k-1\}$ as $i$ ranges over $\{0,\ldots,k-1\}$.
\end{proof}

The problem is  even though we have a simple weight function, the rule for mapping is not simple, and so we have no similar result as Theorem~\ref{thm:period}.  We note one of the results coming out of the proof of Theorem~\ref{thm:period} is  when $\gcd(q,k)=1$ the cycle in the shuffle poset which contains $1$ has maximal length.  This no longer needs to hold when $\gcd(q,k)\neq 1$.  If we look at the example shown in the Appendix, we can see  the cycle which will contain $1$ has length $3$ but the maximal length of a cycle is $6$.

\section{Finding fixed and periodic stacks}\label{sec:posets}
In Section~\ref{sec:cycles} we saw a way to represent our shuffling in terms of a poset of cycles.  We will now exploit this poset to find stacks of cards which are fixed or periodic under shuffling.

To find the fixed stacks, we observe  when looking at the cycles of the shuffling poset in a stack which is fixed under shuffling there are two necessary conditions:
\begin{itemize}
\item[(i)] If there is an edge between levels in the shuffling poset then the label on the lower level must be at least as great as the label on the higher level.  (Otherwise we would swap labels and we would not be fixed.)
\item[(ii)] All the labels in a cycle on a level in the shuffling poset must agree.  (Otherwise when we shift by one in the cycle the stack is not fixed.)
\end{itemize}
It is easy to see that these conditions are also sufficient.

We now form a poset, which we call the {\em fixed poset}, based off the shuffling poset.  Namely, each cycle goes to one element in the poset (at the same height as before) and we connect an edge between two corresponding cycles if one cycle contains $c$ while the other contains $d$ from the same column and there is no $e$ in the column with $\varphi(c)<\varphi(e)<\varphi(d)$.  (Technically, by condition (i) above we would want to connect all cycles which are connected by an edge in the shuffle poset, but by transitivity we only need to consider edges which cannot be broken down further.)  Some examples of fixed posets are given in Figure~\ref{fig:fixpo}.

\begin{figure}[hft]
\centering
\subfloat[$\displaystyle{N=8\atop k=2}$]{\includegraphics{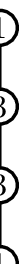}}\hfil
\subfloat[$\displaystyle{N=8\atop k=4}$]{\includegraphics{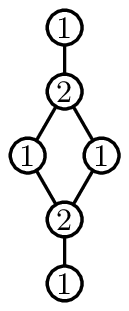}}\hfil
\subfloat[$\displaystyle{N=12\atop k=2}$]{\includegraphics{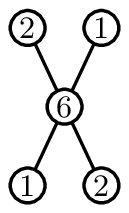}}\hfil
\subfloat[$\displaystyle{N=12\atop k=3}$]{\includegraphics{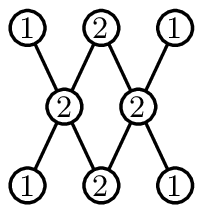}}\hfil

\subfloat[$\displaystyle{N=24\atop k=2}$]{\includegraphics{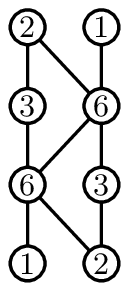}}\hfil
\subfloat[$\displaystyle{N=24\atop k=3}$]{\includegraphics{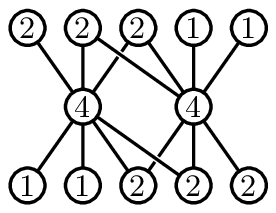}}\hfil
\subfloat[$\displaystyle{N=24\atop k=4}$]{\includegraphics{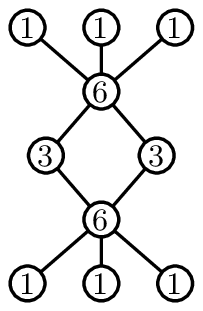}}\hfil
\subfloat[$\displaystyle{N=52\atop k=2}$]{\includegraphics{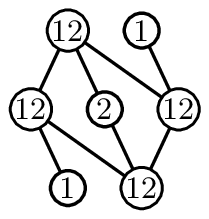}}\hfil
\subfloat[$\displaystyle{N=52\atop k=4}$]{\includegraphics{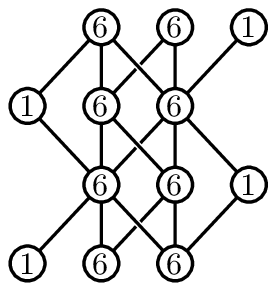}}\hfil
\caption{Fixed posets for various values of $N$ and $k$.}
\label{fig:fixpo}
\end{figure}

Using the two conditions, the fixed stacks are now found by assigning labeled cards to each element in the fixed poset so  the labels are weakly increasing with respect to the fixed poset.  This allows us to quickly and easily find fixed stacks for a given $N$ and $k$.  

It also helps us to answer whether or not fixed stacks with some given property can exist.  For example if $N=52$ and $k=2$ it is easy to see from the fixed poset there is no fixed stack with four labels each label with 13 cards (i.e., such as in a standard deck of cards with the labels being the suits in some order).

Note  the fixed posets in Figure~\ref{fig:fixpo} are symmetric in that if we flipped it upside down we would have the same poset.  This will always happen when our weight function is symmetric (and as already noted we can always find such a weight function).

One question to consider is the number of fixed stacks.  This is dependent on both the fixed poset and the number of labels.  As an example, for a fixed $k$ and a fixed order of the labeling  we can have dramatically different behavior for various values of $N$.  For example, suppose  we have two labels $1\succ 0$ and $k=2$.  If $N=2^t$, then by Theorem~\ref{thm:period} all the cycles have length at most $t$ and there are $t+1$ levels in the fixed poset using the weight function given in Lemma~\ref{lem:qk}.  We can then assign all of the cycles with height $<t/2$ in the poset to have label $1$ and then take any collection of posets from the layer at height $\lceil t/2\rceil$ to have label $1$.  There are at least
\[
{1\over t}{t\choose \lceil t/2\rceil}\approx {cN\over(\ln N)^{3/2}},
\] 
cycles on the $\lceil t/2\rceil$ level for some constant $c>0$, and since we can take any subset of them and form a fixed stack then there are at least $2^{cN/(\ln N)^{3/2}}$ fixed stacks for $N=2^t$ (in particular this is super-polynomial).  By comparison if we consider $N=4{\cdot}3^t$, then it is easy to show  it has a fixed poset of the following form.

\bigskip

\noindent\hfil\includegraphics[scale=0.9]{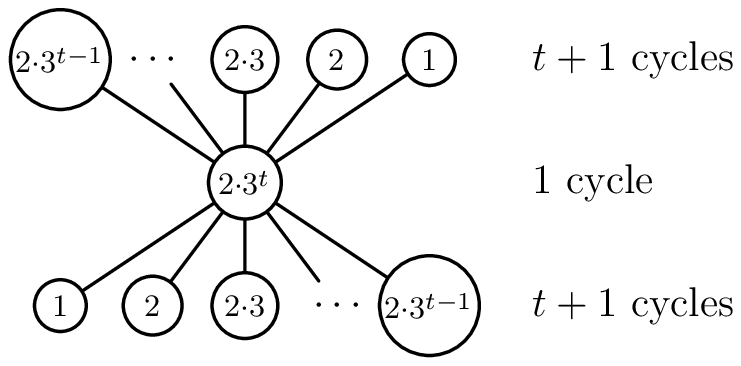}\hfil

\bigskip

So in particular there are exactly $4{\cdot}2^{t}\leq cN^{0.405}=o(N)$ fixed stacks for $N=4{\cdot}3^t$.

To make the comparison more concrete we have  for $N=1024=2^{10}$ there are $292,592,830$ fixed stacks, while for $N=972=4{\cdot}3^5$ there are 128 fixed stacks.

\subsection{Finding periodic stacks}
When looking for periodic stacks we will again set up a poset, but instead of having cycles as elements in the poset we will have individual subscripts be the elements.  The basic idea is to consider what can keep a card at a given subscript from dropping down as we go through repeated shufflings  (because we are periodic the only operation that will happen as we shuffle is shifting the cycles).  In particular, an element $A$ can only drop down if for some $B$ and some $s\geq0$ we have the following edges in our shuffling poset:
\[
\begin{array}{c@{}c@{}c@{}c@{}c@{}c@{}c@{}c@{}c}
A=c_0&\rightarrow&c_1&\rightarrow&\cdots&\rightarrow&c_{s-1}&\rightarrow&c_s\\
&&&&&&&&\big\downarrow\\
B=d_0&\rightarrow&d_1&\rightarrow&\cdots&\rightarrow&d_{s-1}&\rightarrow&d_s
\end{array}
\]

In this case we need to make sure  the card in $B$ will not cause the card in $A$ to sink.  To do this we draw the poset where the elements are the subscripts (as before we can place $A$ at level $\varphi(A)$), and we connect an edge between $A$ and $B$ if we have the edges in our shuffling poset as indicated above {\em and}\/ there is no $e$ in the same column as $c_s$ and $d_s$ so  $\varphi(c_s)<\varphi(e)<\varphi(d_s)$.  We will call this the {\em periodic poset}\/.  An example of the situation is shown in Figure~\ref{fig:1624}.

\begin{figure}[ht]
\centering
\subfloat[Shuffling poset]{\includegraphics[scale=0.8]{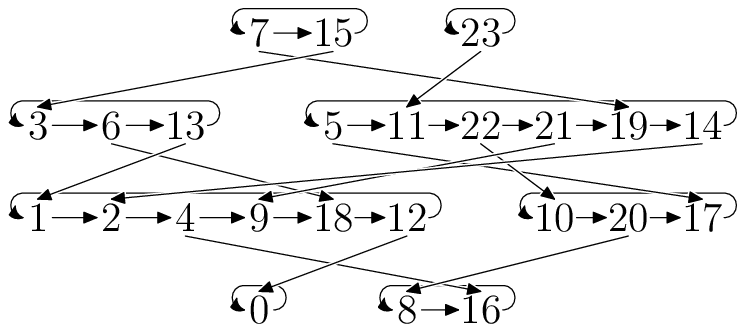}}\hfil
\subfloat[Periodic poset]{\includegraphics[scale=0.7]{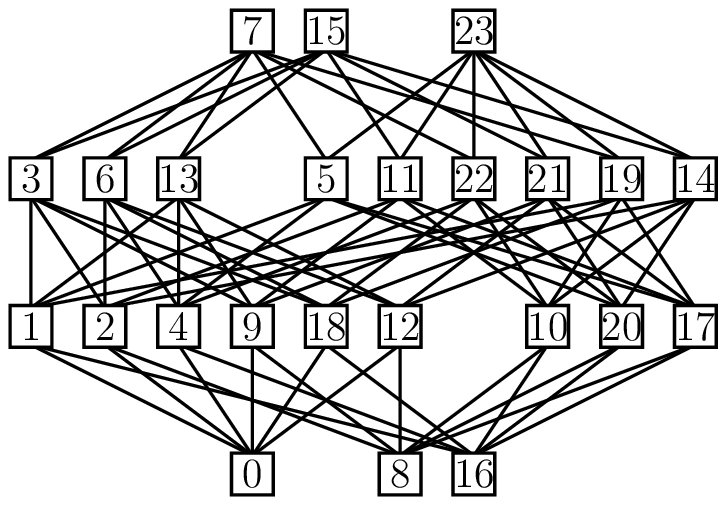}}
\caption{Posets for the case $N=24$ and $k=2$.}
\label{fig:1624}
\end{figure}

As before,  the periodic stacks are now found by assigning labeled cards to each subscript in the periodic poset so  the labels are weakly increasing with respect to the periodic poset.  This allows us to quickly and easily find periodic stacks for given $N$ and $k$.  

The possible periods of the periodic stacks are determined by the size of the cycles in the shuffling poset.  Namely, the possible periods are the divisors of the least common multiple of the cycle lengths in the shuffling poset.  In the case when $\gcd(q,k)=1$ then Theorem~\ref{thm:period} shows  the possible periods of stacks are divisors of $\order_k(N-q)$, on the other hand it is easy to construct a periodic stack for any period dividing $\order_k(N-q)$.

\section{Concluding remarks}\label{sec:conclusion}
We have seen how to find a weight function which can in turn allow us to represent our shuffling in a shuffling poset.  This poset can then be modified to help us find fixed stacks, periodic stacks, and also tell us which periods are possible.

One of the problems  we have not addressed is how quickly a stack will settle into a periodic stack.  As with the number of fixed stacks this depends highly on $N$.  For example, it is not hard to see (i.e., using the periodic poset) that it takes no more than $3tm$ shuffles to settle into a periodic orbit (where $m$ is the least common multiple of the cycle lengths and $t+1$ the number of levels in the shuffling poset).  So for example when $N=2^t$ and $k=2$ then we need at most $3\big(\ln N\big)^2$ steps.  On the other hand for $N=4{\cdot}3^k$ it is easy to construct a stack that takes exactly $N/2$ steps to get into a periodic stack.

There are still many questions that remain.  One of the biggest problems is trying to understand how the weight function works for arbitrary $N$ and $k$.  For instance the weight functions given in Section~\ref{sec:gcd} do not apply for the case $N=24$ and $k=6$.  In this case the algorithm for finding a weight function generates the following:
\[
\begin{array}{l}
\begin{array}{||c||c|c|c|c|c|c|c|c|c|c|c|c||} \hline\hline
n&0&1&2&3&4&5&6&7&8&9&10&11\\ \hline
\varphi(n)&0&1&4&5&6&7&1&2&5&6&7&8 \\ \hline\hline
\end{array}\\[20pt]
\begin{array}{||c||c|c|c|c|c|c|c|c|c|c|c|c||} \hline\hline
n&12&13&14&15&16&17&18&19&20&21&22&23\\ \hline
\varphi(n)&1&2&3&4&7&8&2&3&4&5&8&9 \\ \hline\hline
\end{array}
\end{array}
\]

One notable difference between this weight function and the weight function when $\gcd(q,k)=1$ or $\gcd\big(q/\gcd(q,k),\gcd(q,k)\big)=1$ is that in the blocks the weight function does not consist of consecutive numbers, i.e., it has gaps.  Determining why these gaps are there and where they will appear given $N$ and $k$ will go a long way to understanding the shuffling weight function.

Another important question in regards to the shuffling posets is understanding the possible cycle lengths.  We understand what is going on for the case when $\gcd(q,k)=1$, but all other cases remain open.  For example, is it true  the least common multiple of the cycle lengths is the length of the longest cycle?

We can also consider what happens when instead of only considering a single type of shuffling we consider combining the $j!$ different shuffling rules that come from all the possible rearrangements of the ordering of the labels.  And of course, perhaps the most important thing missing right now is a good magic trick that can be performed using this shuffling rule, which was the original motivation of Larry Carter and J.-C.\ Reyes who first suggested this problem!

\section*{Appendix}
We implement the algorithm given in Section~\ref{algo} to find the weight function in the case $N=32$ and $k=4$.  The steps are shown below.

\bigskip

\noindent\hfil\includegraphics[scale=0.75]{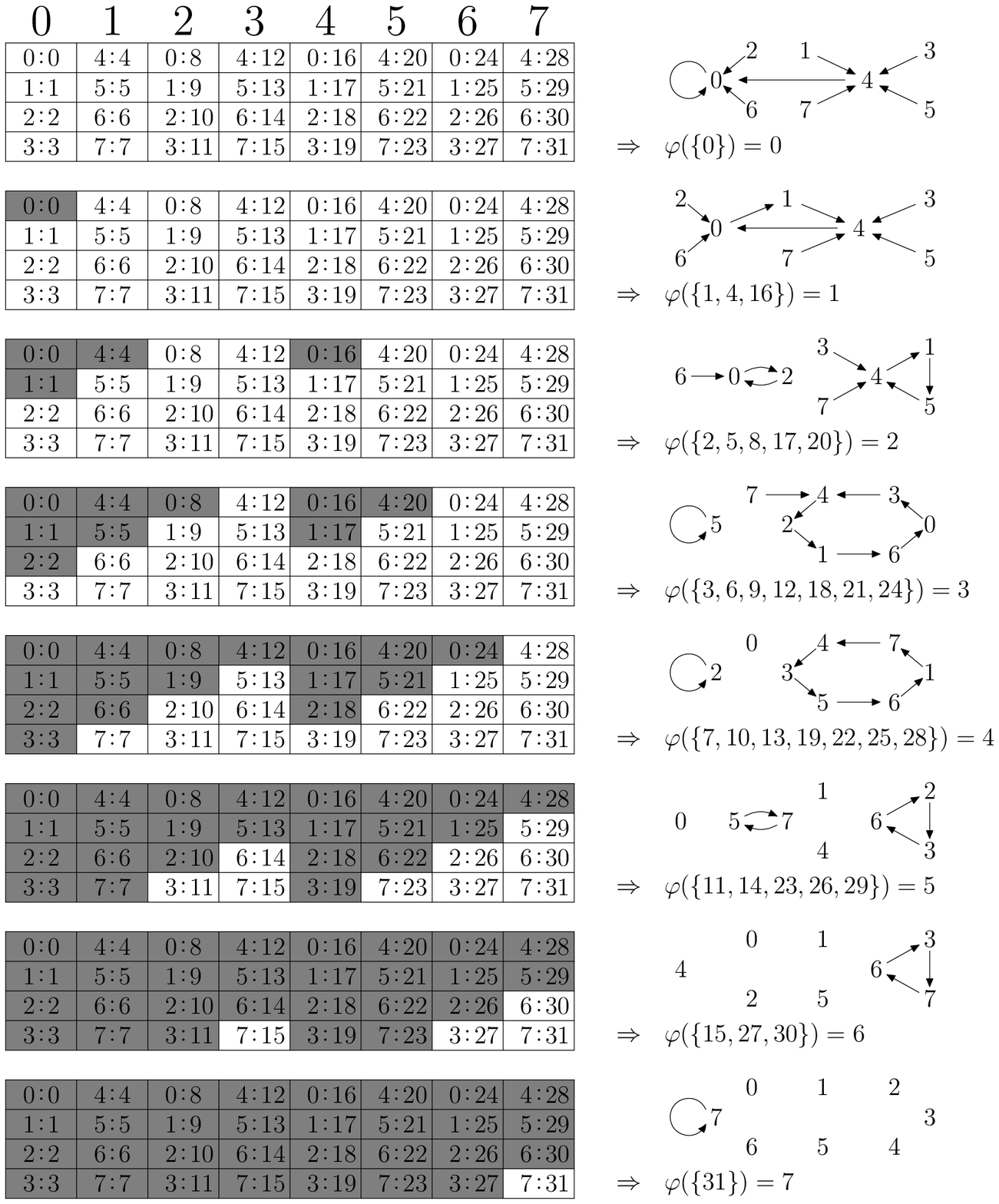}\hfil

\bigskip

The generated weight function is given in the following table.
\[
\begin{array}{l}
\begin{array}{||c||c|c|c|c|c|c|c|c|c|c|c|c|c|c|c|c|c||} \hline\hline
n&0&1&2&3&4&5&6&7&8&9&10&11&12&13&14&15&16\\ \hline
\varphi(n)&0&1&2&3&1&2&3&4&2&3&4&5&3&4&5&6&1 \\ \hline\hline
\end{array}\\[20pt]
\begin{array}{||c||c|c|c|c|c|c|c|c|c|c|c|c|c|c|c||} \hline\hline
n&17&18&19&20&21&22&23&24&25&26&27&28&29&30&31\\ \hline
\varphi(n)&2&3&4&2&3&4&5&3&4&5&6&4&5&6&7 \\ \hline\hline
\end{array}
\end{array}
\]

\end{document}